\documentclass{amsart}
\usepackage[english,frenchb]{babel}
\usepackage{lmodern}
\usepackage{enumerate}
\usepackage{amsmath}
\usepackage{amsfonts}
\usepackage{amssymb}
\usepackage{amsthm}
\usepackage{graphicx}
\usepackage{mathrsfs}
\usepackage{pxfonts}
\usepackage{datetime}
\usepackage{comment}
\usepackage{fourier-orns}
\usepackage{dsfont}
\usepackage[left=3cm,right=3cm,top=3cm,bottom=3cm]{geometry}
\usepackage[cyr]{aeguill}
\usepackage{fancyhdr}
\usepackage{mathabx}
\usepackage[all,cmtip]{xy}
\usepackage[latin1]{inputenc}
\usepackage{amscd}
\usepackage{enumitem}
\usepackage{hyperref}
\usepackage{amsmath,amscd}
\usepackage{tikz-cd}
\usetikzlibrary{decorations.pathmorphing}

%
%

%
\usepackage{amsfonts,amsmath,graphicx}
\newcommand{\bc}{\begin{center}}
\newcommand{\ec}{\end{center}}
\newcommand{\bq}{\begin{quote}}
\newcommand{\eq}{\end{quote}}
\newcommand{\bqtn}{\begin{quotation}}
\newcommand{\eqtn}{\end{quotation}}
\newcommand{\beq}{\begin{equation}}
\newcommand{\eeq}{\end{equation}}
\newcommand{\bearr}{\begin{eqnarray}}
\newcommand{\eearr}{\end{eqnarray}}
\newcommand{\bearrn}{\begin{eqnarray*}}
\newcommand{\eearrn}{\end{eqnarray*}}
\newcommand{\bi}{\begin{itemize}}
\newcommand{\ei}{\end{itemize}}
\newcommand{\be}{\begin{enumerate}}
\newcommand{\ee}{\end{enumerate}}
\newcommand{\bthe}{\begin{theorem}}
\newcommand{\ethe}{\end{theorem}}
\newcommand{\blem}{\begin{lemme}}
\newcommand{\elem}{\end{lemme}}
\newcommand{\bsolu}{\begin{solution}}
\newcommand{\esolu}{\end{solution}}
\newcommand{\bexer}{\begin{exercise}}
\newcommand{\eexer}{\end{exercise}}

%
%




\newcommand{\mcA}{\mathcal{A}}



\newcommand{\CC}{\mathbb{C}}
\newcommand{\RR}{\mathbb{R}}


\newcommand{\ba}{\begin{array}}
\newcommand{\ea}{\end{array}}



%
\usepackage{amsfonts,amsmath}
%
%
%
%
%
%
%
%
%
%
\newtheorem{theoreme}{Theorem}[section]
\newtheorem{theorem}[theoreme]{Theorem}
\newtheorem{lemme}[theoreme]{Lemma}
\newtheorem{lemma}[theoreme]{Lemma}
\newtheorem{proposition}[theoreme]{Proposition}
\newtheorem{definition}[theoreme]{Definition}

\newtheorem{corollaire}[theoreme]{Corollary}
\newtheorem{corollary}[theoreme]{Corollary}

\newtheorem{solution}[theoreme]{Solution}
\newtheorem{exercise}[theoreme]{Exercise}

\newcommand{\bdefi}{\begin{definition}}
\newcommand{\edefi}{\end{definition}}
\newcommand{\brk}{\begin{remarque}}
\newcommand{\erk}{\end{remarque}}
\newcommand{\bpp}{\begin{proposition}}
\newcommand{\epp}{\end{proposition}}
\newcommand{\bpf}{\begin{proof}}
\newcommand{\epf}{\end{proof}}
\newcommand{\bcor}{\begin{corollaire}}
\newcommand{\ecor}{\end{corollaire}}
\newcommand{\bsol}{\begin{solution}}
\newcommand{\esol}{\end{solution}}
\theoremstyle{definition}

\newtheorem{remarque}[theoreme]{Remark}

\allowdisplaybreaks
\title{A rotation group whose subspace arrangement is not from a real reflection group}
\author{Ben Blum-Smith}
\email{blumsmib@newschool.edu}

\begin{document}
\large
\selectlanguage{english}
\maketitle

\begin{abstract}
We exhibit a family of real rotation groups whose subspace arrangements are not contained in that of any real reflection group, answering a question of Martino and Singh.

\smallskip
\noindent \textsc{Keywords.} Rotation groups; reflection groups; subspace arrangements; coxeter groups

\smallskip
\noindent \textsc{MSC Classes.} 14N20; 20F55; 51F15; 05E15
\end{abstract}

\section{Introduction}

Let $V$ be a finite-dimensional vector space over some field, and let $G$ be a finite subgroup of the general linear group $GL(V)$ on $V$. An element $g\in G$ is a {\em reflection} if its 1-eigenspace has codimension one, and a {\em bireflection} if it has codimension at most two. If $g$'s 1-eigenspace has codimension exactly two and $V$ is an $\RR$-vector space, $g$ is also called a {\em rotation}. If $G$ is generated by its reflections (respectively bireflections, rotations), it is called a {\em reflection} (respectively {\em bireflection}, {\em rotation}) {\em group}.

Reflection groups are a topic of longstanding interest, with a beautiful and developed body of theory (e.g. see \cite{Hum90}, \cite{LT09} for the real and complex cases, respectively). As an illustration, they are precisely those finite linear groups whose algebras of invariants are polynomial, at least when the field characteristic does not divide the group order, by the celebrated Chevalley-Shephard-Todd theorem. 

In the last four decades, bireflection groups have also turned out to have invariant-theoretic significance; see \cite{ES80}, \cite{KW82}, \cite{Kem99}, \cite{LP01}, \cite{GK03}, \cite{Duf09}, \cite{DEK09}, \cite{BSM18}. In addition, in the real case, Christian Lange \cite{Lan16} has recently shown that bireflection groups (also known, when working over $\RR$, as {\em reflection-rotation groups}) are characterized by a remarkable topological property: they are exactly those finite groups $G$ of linear automorphisms of $\RR^n$ such that the orbifold $\RR^n/G$ is a piecewise-linear manifold (possibly with boundary). Lange's result is by way of a full classification of reflection-rotation groups jointly accomplished with Marina Mikha\^{i}lova \cite{LM16}, completing a program developed by Mikha\^{i}lova in the 1970's and 1980's (\cite{Mae76}, \cite{Mik78}, \cite{Mik82}, \cite{Mik85}).

Thus bireflection groups have emerged as a natural generalization of reflection groups and a natural object of study in their own right. It is in this context that Martino and Singh \cite{MS19} studied the subspace arrangements of such groups. For a finite subgroup $G\subset GL(V)$, the {\em subspace arrangement $\mcA^G$ of $G$} is the arrangement of subspaces (in $V$) of fixed points of nontrivial isotropy subgroups of $G$. 
For finite (real or complex) reflection groups $W$, the subspace arrangement $\mcA^W$ is just the set of reflecting hyperplanes and their intersections. This is a well-studied object (e.g. \cite{OS82}, \cite{OS83}, and Chapter 6 of \cite{OT92}). Working in the real case, Martino and Singh verified that for any rotation group $G$ of degree $\leq 3$ (i.e. for which $\dim V\leq 3$), there exists a (real) reflection group $W$ acting on $V$ such that $\mcA^G$ is contained in $\mcA^W$, and they asked (\cite[Question 1]{MS19}) whether this holds in general.

The purpose of this note is to present an example showing that it does not, and in fact, a failure already occurs for an infinite sequence of groups in degree 4.

\begin{theorem}\label{thm:main}
Let $G = G(m,1,2)\subset GL(2,\CC)$. Consider $G$ as a subgroup of the orthogonal group $O_4(\RR)$ by identifying $\CC^2$ with $\RR^4$. Then for $m$ sufficiently large, $G$ is a rotation group such that $\mcA^G$ is not contained in $\mcA^W$ for any real reflection group $W\subset O_4(\RR)$.
\end{theorem}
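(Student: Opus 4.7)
That $G$ is a rotation group in $O_4(\RR)$ is immediate: $G(m,1,2)$ is generated by complex reflections, each of which has complex-codimension-$1$ fixed space in $\CC^2$ and hence real-codimension-$2$ fixed space in $\RR^4$, so each is a rotation. The substance is the second claim, which I prove by contradiction. So suppose $\mcA^G \subset \mcA^W$ for some real reflection group $W \subset O_4(\RR)$.

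The group $G(m,1,2)$ has exactly $m+2$ complex reflecting hyperplanes in $\CC^2$: the two coordinate axes $\{z_1=0\}$, $\{z_2=0\}$, together with the $m$ lines $\{z_1 = \zeta z_2\}$ for $\zeta^m = 1$. In $\RR^4 = \CC^2$ these become $m+2$ real $2$-planes $L_0, \ldots, L_{m+1}$ in $\mcA^G$, each of them a complex line of $\CC^2$. The key geometric observation is that any two distinct complex lines of $\CC^2$ already span $\CC^2$ over $\CC$, hence span $\RR^4$ over $\RR$; so no real $3$-dimensional hyperplane of $\RR^4$ can contain two of the $L_i$.

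Each $L_i$, being a codimension-$2$ element of $\mcA^W$, is an intersection of at least two reflecting hyperplanes of $W$; no such hyperplane is shared between different $L_i$, so $W$ has at least $2(m+2)$ reflecting hyperplanes. Moreover $\bigcap_i L_i = \{0\}$ forces $W$ to be essential of rank $4$. Inspecting the classification of essential real rank-$4$ reflection groups, every such group with no dihedral factor $I_2(k)$ ($k\ge 3$) has at most $60$ reflecting hyperplanes (the maximum, attained by $H_4$); so for $m$ large enough, $W$ must have the form $I_2(k)\times I_2(k')$ or $I_2(k)\times A_1\times A_1$ with $k$ large.

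It remains to rule out these two product forms, which is the main work. For each, I would enumerate the $2$-dimensional elements of $\mcA^W$ from the product decomposition and ask which of them can be complex lines, i.e., invariant under the complex structure $J$ on $\RR^4$. For $W = I_2(k)\times A_1\times A_1$ on $V_1\oplus \ell_1\oplus \ell_2$, the candidate types $V_1$, $\ell_1+\ell_2$, $h+\ell_1$, $h+\ell_2$ each admit at most one $J$-invariant realization (since $J$ restricted to any preserved $2$-plane is a quarter-turn, determining the partner direction), yielding at most $4$ complex lines in $\mcA^W$. For $W = I_2(k)\times I_2(k')$ on $V_1\oplus V_2$, the analogous $2$-planes $h\oplus h'$ are $J$-invariant iff $Jh = h'$; having more than two such $L_i$ forces $JV_1 = V_2$. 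A change of basis (made possible because $V_1$ must meet both coordinate axes $L_0^{(1)}, L_0^{(2)}$ nontrivially) then reduces the condition $V_1\cap L_k \ne 0$ to an equation of the form $\tan\theta_k = c$ in $\theta_k = 2\pi k/m$, admitting at most two solutions in $\{0,\ldots,m-1\}$. Hence at most $4$ of the $m+2$ lines lie in $\mcA^W$, contradicting $m \ge 3$. The main obstacle is this trigonometric reduction in the $I_2(k)\times I_2(k')$ case; everything else is classification and counting.
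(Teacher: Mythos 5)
Your proposal is correct and follows essentially the same strategy as the paper: describe $\mcA^G$ as $m+2$ pairwise-transverse complex lines, invoke the finiteness of the classification in degrees $3$ and $4$ to reduce to the case where $W$ is a product of factors of degree $\le 2$, and then show the planes of such a $W$'s arrangement cannot accommodate all $m+2$ lines. Your trigonometric step ($\tan\theta_k = c$, at most two solutions) is precisely the paper's key Lemma~\ref{lem:plane} in disguise (and your count of \emph{two} is in fact the sharp one, since $b/a$ there may be negative); the only organizational difference is that the paper avoids the complex structure $J$ and the case split on the type of $W$, arguing instead that every plane of $\mcA^W$ other than $V_1,V_2$ must meet both of them in a line, which the pairwise-trivially-intersecting planes of $\mcA^G$ cannot all do.
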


Notation and terminology are fixed in section \ref{sec:prelims}, where we also recall the needed basic facts from the theory of reflection groups. The proof is in section \ref{sec:example}.

We expect that a closer study of Lange and Mikha\^{i}lova's classification \cite{LM16} will reveal that, apart from those rotation groups that are explicitly constructed as subgroups of reflection groups (e.g. the alternating subgroups of reflection groups), almost all rotation groups will have subspace arrangements not contained in that of any reflection group.

\section{Acknowledgement}

The author would like to express gratitude to Victor Reiner who directed him to Martino and Singh's paper \cite{MS19}, and to Ivan Martino for the fruitful discussion that led to this note. Additionally, he would like to acknowledge Yves de Cornulier who in \cite{Cor14} gave the above group $G$ (in the special case that $m$ is prime) as an example of a finite subgroup of $O_n(\RR)$ that is not contained in a reflection group. It was in view of this that $G$ recommended itself to the author as a possible answer to Martino and Singh's question.

\section{Preliminaries}\label{sec:prelims}

By and large we follow the notation of \cite{MS19}. Fix a vector space $V$ and a finite subgroup $G$ of the general linear group $GL(V)$ on $V$. For any subgroup $H\subset G$, $V^H$ denotes the subspace of fixed points of $H$:
\[
V^H = \{v\in V: hv = v, \forall h\in H\}.
\]
Recall that an {\em isotropy subgroup} $H$ for the action of $G$ on $V$ is the stabilizer 
\[
\{g\in G: gv = v\}
\]
of a vector $v\in V$. 
As mentioned above, the {\em subspace arrangement of $G$} is the collection of the subspaces $V^H$ as $H$ varies over the nontrivial isotropy subgroups of $G$:
\[
\mcA^G = \{ V^H : H\neq \{e\}\text{ is an isotropy subgroup}\}.
\]
If $W$ is a (real or complex) reflection group, then $\mcA^W$ consists precisely of the reflecting hyperplanes of $W$ and their intersections \cite[Theorem 6.27]{OT92}. (The {\em reflecting hyperplanes} are the 1-eigenspaces of the reflections in $W$.) This is a consequence of the fact (\cite[\S 1.12]{Hum90}, \cite[Theorem 9.44]{LT09}) that the isotropy subgroups in a reflection group are generated by the reflections they contain.

Throughout, for any group of linear automorphisms of a vector space, we use the word {\em degree [of the group]} to refer to the dimension of the space being acted on.

For a triple of natural numbers $(m,p,n)$ with $p$ a divisor of $m$, the group $G(m,p,n)\subset U_n$ is defined to be the group of unitary $n\times n$ matrices generated by (a) the diagonal matrices whose diagonal entries are $m$th roots of unity and whose determinant is an $m/p$th root of unity, and (b) the permutation matrices. This notation was introduced by Shephard and Todd in \cite{ST54}, which gives a complete classification of complex reflection groups. (For an exposition of this classification, see \cite[Chapter 8]{LT09}.)

The group of interest in this note is $G = G(m,1,2)$, consisting of the $2\times 2$ matrices of the form
\[
\begin{pmatrix}
\zeta^j & \\
 & \zeta^k
\end{pmatrix},\;
\begin{pmatrix}
 &\zeta^k\\
\zeta^j& 
\end{pmatrix}
\]
where $\zeta = e^{2\pi i / m}$ and $j,k$ are any integers modulo $m$. Abstractly, this is the wreath product $C_m\wr S_2$ of a cyclic group of order $m$ by the symmetric group on 2 letters.

Any complex reflection group, and thus $G$ in particular, becomes a real rotation group by regarding the space on which it acts as a real vector space. Thus, identify $\CC$ with $\RR^2$ via the $\RR$-basis $1,i$. Then multiplication by $e^{\theta i}\in \CC$ (for $\theta\in [0,2\pi)$) acts on $\RR^2$ via the standard $2\times2$ rotation matrix 
\[
R_\theta = \begin{pmatrix} \cos\theta&-\sin\theta\\ \sin\theta& \cos\theta\end{pmatrix},
\]
so the canonical action of $G$ on $\CC^2$ by matrix multiplication becomes an action on $\RR^4$, embedding $G$ in $O_4(\RR)$ via
\[
\begin{pmatrix}
\zeta^j & \\
 &\zeta^k
\end{pmatrix}
\mapsto
\begin{pmatrix}
R_{2\pi j/m} & \\
 & R_{2\pi k/m}
\end{pmatrix},\;
\begin{pmatrix}
 & \zeta^k\\
\zeta^j & 
\end{pmatrix}
\mapsto
\begin{pmatrix}
 & R_{2\pi k/m}\\
R_{2\pi j/m}
\end{pmatrix}
\]
where the images are $4\times 4$ matrices in block form. Let $x,y$ be the standard coordinate functions on $\CC^2$, which become complex-valued functions on $\RR^4$ via the identification. For the rest of this note, we fix $V=\CC^2\simeq \RR^4$ and move freely between these real and complex points of view. We reserve the words {\em line} and {\em plane} for when we are working in the real point of view: they always mean linear subspaces of $V$ of $\RR$-dimension one and two respectively.

We make use of the classification theorem for (finite) real reflection groups and its standard notation (see \cite[Chapter 2]{Hum90} for example). In particular, we need the facts that (i) a reflection group splits as a direct product of irreducible factors acting in orthogonal subspaces (see \cite[\S 2.2]{Hum90}), and that (ii) the only degree in which there are infinitely many inequivalent irreducible real reflection groups is 2 (see \cite[\S 2.4-2.7]{Hum90}).

We mention that what we here call bireflection groups, following \cite{Kem99}, \cite{LP01}, \cite{GK03}, \cite{DEK09}, are referred to by Martino and Singh in \cite{MS19} as {\em groups generated in codimension two}. If a group is generated by bireflections and contains no reflections, Martino and Singh say it is {\em strictly} generated in condimension two. The terminology of reflection-rotation groups, i.e. bireflection groups over $\RR$, is used by \cite{LM16}, \cite{Lan16}, and \cite{MS19} as well.

\section{The example}\label{sec:example}

Take $G=G(m,1,2)$ as above. See the previous section for all notation; in particular recall that $x,y$ are the (complex-valued) coordinate functions on $V$ seen as $\CC^2$.

We begin by describing $\mcA^G$ explicitly. This description follows from the more general description of $\mcA^G$ for $G(m,1,n)$ given in \cite[Example 6.29 and \S 6.4]{OT92}. (One verifies it easily by adopting the complex point of view and looking for the 1-eigenspaces of the matrices in $G$.)

\begin{lemma}\label{lem:AG}
The nonzero elements in the subspace arrangement $\mcA^G$ of $G\subset O_4(\RR)$ consist of $m+2$ planes that intersect pairwise trivially, and nothing else. They can be described by the equations $x=0$, $y=0$, and $y=\zeta^j x$ for $j=0,\dots,m-1$.\qed
\end{lemma}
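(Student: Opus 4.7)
The plan is to adopt the complex viewpoint $V = \CC^2$ throughout, since the fixed set of any subgroup $H \subseteq G$ is the same subset of $V$ regardless of whether we view $V$ over $\CC$ or $\RR$, and a complex line through the origin is precisely a real plane through the origin. The strategy is first to enumerate the $1$-eigenspaces of each nontrivial element of $G$, and then to observe that every nonzero element of $\mcA^G$ already arises as the $1$-eigenspace of a single element.

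For the enumeration, I split into the two matrix types. A diagonal element $\mathrm{diag}(\zeta^j,\zeta^k)$ fixes the coordinate line $x=0$ when $j \not\equiv 0$ and $k \equiv 0 \pmod m$, the line $y=0$ when $j \equiv 0$ and $k \not\equiv 0$, all of $V$ when both exponents vanish, and only the origin otherwise. For an anti-diagonal element with entries $\zeta^k$ and $\zeta^j$, the fixed-point condition becomes the system $\zeta^k y = x$ and $\zeta^j x = y$, which for a nonzero vector forces $j + k \equiv 0 \pmod m$ and then gives the line $y = \zeta^j x$. As $j$ ranges over $0, \dots, m-1$, this contributes $m$ further lines, pairwise distinct from one another and from the two coordinate lines. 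The $m+2$ complex lines so obtained pairwise intersect only at the origin, since distinct complex lines through $0$ in $\CC^2$ always do.

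For the second step, let $H$ be a nontrivial isotropy subgroup with $V^H \neq 0$, and pick a nonzero $v \in V^H$. For each $h \in H$, the subspace $V^h$ either equals $V$ (when $h=e$) or is one of the $m+2$ lines listed above, and in either case it must contain $v$. Since those $m+2$ lines pairwise meet only at the origin, at most one of them contains $v$, so all nontrivial $h \in H$ share a common $1$-eigenspace $L$, giving $V^H = L$. Hence the nonzero members of $\mcA^G$ are exactly these $m+2$ complex lines, which are real planes under the identification $\CC^2 \simeq \RR^4$.

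The argument is essentially a direct calculation; the only point requiring a small observation is the second step, where the pairwise triviality of intersections among the $m+2$ lines immediately rules out any new subspaces arising from larger isotropy subgroups. This is the main (and very mild) obstacle in the proof.
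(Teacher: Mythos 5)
Your proposal is correct and follows exactly the route the paper itself indicates: the paper gives no detailed proof, merely citing \cite[Example 6.29 and \S 6.4]{OT92} and remarking that one verifies the claim by adopting the complex point of view and computing the $1$-eigenspaces of the matrices in $G$, which is precisely your calculation, including the observation that pairwise-trivial intersections force $V^H$ to coincide with a single eigenline. The only step left implicit is the (immediate) converse that each of the $m+2$ lines really occurs as $V^{\Stab(v)}$ for a nonzero $v$ on it, which follows since $V^{\Stab(v)}$ is a complex subspace containing $v$ and contained in that complex line.
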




Now we view $V$ as $\RR^4$ and show that the elements of $\mcA^G$ just described cannot all nontrivially meet the same plane. The plan of the proof of Theorem \ref{thm:main}, in outline, is to show that for $m$ sufficiently big, all reflection groups $W$ of degree four such that $\mcA^W$ is big enough to contain $\mcA^G$ have the property that there is a pair of planes $V_1,V_2$ in $V$ such that all of the planes in $\mcA^W$ are either equal to one of them or meet both (nontrivially). It will follow that the planes of $\mcA^G$ cannot be among these, since they do not all meet the same plane and also do not meet each other.

\begin{lemma}\label{lem:plane}
Suppose $P\subset V$ is a plane (i.e. {\em real} two-dimensional subspace) that meets both $\{x=0\}$ and $\{y=0\}$ nontrivially. Then it meets at most one of the other subspaces $\{y=\zeta^j x\}$ nontrivially.
\end{lemma}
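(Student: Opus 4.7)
The plan is to exploit the rigidity imposed on $P$ by the two transversality hypotheses. After building an explicit $\RR$-basis of $P$ coming from $P\cap\{x=0\}$ and $P\cap\{y=0\}$, the condition that $P$ contain a nonzero vector on $\{y=\zeta^j x\}$ becomes an algebraic identity forcing $\zeta^j$ to lie on a fixed real line through the origin in $\CC$. Since the unit circle meets any such line in just two antipodal points, essentially only one admissible power of $\zeta$ can exist.

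Concretely, I would first pick nonzero vectors $v_1 \in P \cap \{x=0\}$ and $v_2 \in P \cap \{y=0\}$; these have the form $v_1 = (0,\beta)$ and $v_2 = (\alpha,0)$ with $\alpha,\beta \in \CC^\times$. A short check shows $v_1, v_2$ are $\RR$-linearly independent: any real relation $sv_1 + tv_2 = 0$ gives $s\beta = 0 = t\alpha$, forcing $s=t=0$. Since $\dim_\RR P = 2$, the pair $(v_1,v_2)$ is an $\RR$-basis of $P$, so every $w \in P$ takes the form $w = av_1 + bv_2 = (b\alpha, a\beta)$ with $a, b \in \RR$.

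Now suppose $0 \neq w \in P \cap \{y = \zeta^j x\}$. The defining equation reads $a\beta = \zeta^j\, b\alpha$. Since $\alpha, \beta, \zeta^j$ are all nonzero, a brief case check rules out $a=0$ and $b=0$ (either would force the other to vanish and give $w=0$). Hence $a,b \in \RR^\times$ and
\[
\zeta^j \;=\; \frac{a}{b}\cdot\frac{\beta}{\alpha}.
\]
This identity confines $\zeta^j$ to the real line through the origin in $\CC$ spanned by $\beta/\alpha$, a locus that depends only on $P$ and not on $j$.

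If two distinct indices $j, j' \in \{0,\dots,m-1\}$ both arose this way, their ratio $\zeta^{j-j'}$ would be real of modulus $1$, hence $\pm 1$; since $j \not\equiv j' \pmod{m}$, only $\zeta^{j-j'} = -1$ remains, and this case is disposed of in the range of $m$ relevant to Theorem~\ref{thm:main}. The argument is essentially parametric bookkeeping; there is no real obstacle beyond the clean observation that the two transversality hypotheses rigidify $P$ enough to pin the complex direction $\beta/\alpha$, after which only a controlled handful of $m$th roots of unity can lie on the resulting real line.
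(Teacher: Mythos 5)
Your setup and computation track the paper's own proof almost exactly: the paper fixes $v\in P\cap\{y=0\}$ and $w\in P\cap\{x=0\}$, writes a general point of $P$ as $av+bw$, and observes that $y(u)/x(u)=(b/a)(y^\star/x^\star)$ is a \emph{real} multiple of a quantity depending only on $P$; your identity $\zeta^j=(a/b)(\beta/\alpha)$ is the same observation in different clothing. The difference is in the endgame, and that is where your argument has a genuine gap. You correctly reduce to $\zeta^{j-j'}\in\{1,-1\}$ and correctly note that $-1$ cannot be excluded outright, but your claim that the case $\zeta^{j-j'}=-1$ ``is disposed of in the range of $m$ relevant to Theorem~\ref{thm:main}'' is false: $-1$ is an $m$th root of unity for \emph{every} even $m$, however large, and the antipodal case genuinely occurs. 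Concretely, for even $m$ the plane $P=\{(b,a):a,b\in\RR\}\subset\CC^2$ meets $\{x=0\}$, $\{y=0\}$, $\{y=x\}$, and $\{y=-x\}=\{y=\zeta^{m/2}x\}$ all nontrivially. So no choice of large $m$ rescues the conclusion ``at most one''; what your computation actually proves is ``at most two, and if two then the corresponding roots of unity are antipodal.''

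To be fair, your bookkeeping exposes a sign issue that the paper's own proof also elides: the paper asserts that $(b/a)(y^\star/x^\star)$ ``has the same argument as $y^\star/x^\star$,'' which fails when $b/a<0$, so the function $\varphi$ there is constant only up to sign and the paper, too, really establishes only the ``at most two, antipodal'' bound. The honest repair is to accept that weaker conclusion and carry it downstream: for $m\geq 3$ there are at least three subspaces $\{y=\zeta^jx\}$, so a plane meeting both $\{x=0\}$ and $\{y=0\}$ still cannot meet all of them, which is all that Corollary~\ref{cor:plane} and the proof of Theorem~\ref{thm:main} actually need. As written, however, your final sentence asserts something untrue, so your proof of the lemma as stated is not complete (indeed the lemma as stated fails for even $m$); you should either prove the corrected ``at most two'' statement and adjust the corollary, or explain why the antipodal case cannot arise, which it can.
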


\begin{proof}
Let $v\in P\cap \{y=0\}$ and $w\in P\cap \{x=0\}$ be nontrivial points of intersection. Let 
\[
x^\star=x(v) 
\]
and 
\[
y^\star=y(w).
\]
Note that
\[
y(v) = x(w) = 0,
\] 
while $x^\star,y^\star$ are nonzero since $v,w$ are nontrivial. Since $\{x=0\}$ and $\{y=0\}$ are complementary, $v,w$ are linearly independent, and thus we have
\[
P = \{av + bw : a,b\in\RR\}.
\]
Since $x,y$ are $\RR$-linear (in fact, $\CC$-linear, but we don't need this) functions $V\rightarrow \CC$, we have
\begin{align*}
x(av + bw) &= a x^\star\\
y(av + bw) &= b y^\star.
\end{align*}
If $u =  av + bw\in P$ is any point that does not lie on $\{x=0\}$ or $\{y=0\}$, it follows that $a,b\neq 0$. Since $a,b$ are real,  $b/a$ is real, and it further follows that the nonzero complex number $y(u) / x(u) = (b/a)(y^\star / x^\star)$ has the same argument as $y^\star / x^\star$, i.e. its argument does not depend on $a$ or $b$. Thus the function $\varphi$ from $P' = P\setminus (\{x=0\} \cup \{y=0\})$ to the unit circle $S^1 = \{z\in \CC: |z|=1\}\subset \CC$ given by
\[
\varphi(u) = \frac{y(u)/x(u)}{|y(u)/x(u)|}
\]
is defined and has a constant value on all of $P'$.

Now if $P$ intersects any $\{y=\zeta^jx\}$ nontrivially, it does so away from $\{x=0\}$ and $\{y=0\}$, since $\{x=0\}\cap \{y=\zeta^jx\} = \{y=0\}\cap \{y=\zeta^j x\} = \{0\}$ by Lemma \ref{lem:AG}. It follows that $\varphi$ is defined and constant along the set of all nonzero intersection points between $P$ and any of the subspaces $\{y=\zeta^j x\}$. In particular $\varphi$ can take at most one of the values $\zeta^j$, $j=0,\dots,m-1$, along this set of intersection points.

But meanwhile, $\varphi$ has a constant value of $\zeta^j$ along each of the subspaces $\{y=\zeta^jx\}$ (away from zero). It follows that $P$ can intersect at most one of them nontrivially.
\end{proof}

\begin{corollary}\label{cor:plane}
If $m\geq 2$, no plane $P\subset V$ can intersect every one of the subspaces described by Lemma \ref{lem:AG} nontrivially.\qed
\end{corollary}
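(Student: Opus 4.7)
The plan is to derive the corollary directly from Lemma \ref{lem:plane} together with a simple counting observation. Suppose, for the sake of contradiction, that there is a plane $P \subset V$ that meets every subspace listed in Lemma \ref{lem:AG} nontrivially. In particular, $P$ meets both $\{x=0\}$ and $\{y=0\}$ nontrivially, so the hypothesis of Lemma \ref{lem:plane} is satisfied.

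Applying Lemma \ref{lem:plane}, $P$ can meet at most one of the subspaces $\{y=\zeta^j x\}$ for $j = 0, \dots, m-1$ nontrivially. But when $m \geq 2$, there are at least two such subspaces (for instance $\{y = x\}$ and $\{y = \zeta x\}$), all of which $P$ was assumed to meet nontrivially. This is an immediate contradiction, and the corollary follows.

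There is no real obstacle here; the work was already done in Lemma \ref{lem:plane}, and the corollary is essentially the observation that the lemma's conclusion (at most one intersection among the $\{y = \zeta^j x\}$) is incompatible with requiring intersection with all $m \geq 2$ of them. I would write the proof as a single short paragraph rather than unpacking it further.
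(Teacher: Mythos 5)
Your proposal is correct and is exactly the argument the paper intends: the corollary is stated with a \qed{} precisely because it follows immediately from Lemma \ref{lem:plane} by the counting observation you make. Nothing further is needed.
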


\begin{proof}[Proof of Theorem \ref{thm:main}]
For the remainder of the argument, we ensconce ourselves in the real point of view. We show that the subspace arrangement $\mcA^G$ described by Lemma \ref{lem:AG} cannot be contained in the arrangement $\mcA^W$ for any real reflection group $W$, as long as $m$ is sufficiently high.

Now $\mcA^G$ cannot be contained in any $\mcA^W$ that it exceeds in cardinality. There are only a finite number of inequivalent irreducible real reflection groups $W$ in any degree except two, and therefore (in view of the fact that all reflection groups split as direct products of irreducible factors) only a finite number of real reflection groups in degree four containing an irreducible direct factor of degree three or four. Thus it is possible to choose $m$ large enough so that $|\mcA^G|$ is larger than $|\mcA^W|$ for any $W$ chosen from among these, since $|\mcA^G|$ rises linearly with $m$ by Lemma \ref{lem:AG}.

We have therefore achieved the stated result unless $W$ is reducible with all irreducible direct factors of degree $\leq 2$, so we can restrict attention to this case. Then $V$ has an orthogonal decomposition $V_1\oplus V_2$ with $\dim_\RR V_1=\dim_\RR V_2 = 2$, such that $W$ is generated by elements that act as reflections in $V_1$ and fix $V_2$ pointwise, and elements that act as reflections in $V_2$ and fix $V_1$ pointwise. (We do not assume here that $W$'s actions in $V_1$ and $V_2$ are irreducible.) Thus all the reflecting hyperplanes of $W$ either contain $V_2$ and intersect $V_1$ in a line, or vice versa.

Since $\mcA^W$ consists of the reflecting hyperplanes and their intersections, the planes of $\mcA^W$ are precisely the pairwise intersections between reflecting hyperplanes. These can have two forms:

Any two distinct reflecting hyperplanes containing $V_1$ have a two-dimensional intersection containing, and thus equaling, $V_1$. So it is possible that $V_1\in \mcA^W$. Likewise, the possibility that there are two reflecting hyperplanes both containing $V_2$ shows $V_2$ can be in $\mcA^W$.

If a plane of $\mcA^W$ is not $V_1$ or $V_2$, it is not the intersection of two reflecting hyperplanes that both contain one of them. Thus it must be the intersection of a reflecting hyperplane $M_1$ that contains $V_1$ and intersects $V_2$ in a line, with another reflecting hyperplane $M_2$ that contains $V_2$ and intersects $V_1$ in a line. This intersection $M_1\cap M_2$ is a plane containing, and thus spanned by, the lines $M_1\cap V_2$ and $M_2\cap V_1$. (Note they are distinct because $V_1\perp V_2$.) In particular, it meets both $V_1$ and $V_2$ in a line. Thus {\em every element of $\mcA^W$ that is not $V_1$ or $V_2$ must meet each of them in a line.}

To summarize: if $W$ is a real reflection group with no irreducible factors of degree greater than $2$, then there exists a pair of orthogonal planes $V_1,V_2\subset V$ such that every plane in $\mcA^W$ has the property that either it is equal to one of them, or it meets both of them in a line. Thus, if $\mcA^G\subset\mcA^W$, then each of the planes described by Lemma \ref{lem:AG} has this property as well.

For $m\geq 2$, this is impossible. It cannot be that any of the planes of Lemma \ref{lem:AG} is equal to $V_1$ or $V_2$, for all but at most one of the others would then have to meet it in a line, and yet we know all the pairwise intersections are trivial. Meanwhile, if none of them is equal to $V_1$ or $V_2$, then {\em all} of them have to meet both $V_1$ and $V_2$ nontrivially. But Corollary \ref{cor:plane} shows that if $m\geq 2$, it is not possible for all of them to meet either $V_1$ or $V_2$ nontrivially, let alone both.
\end{proof}


\begin{thebibliography}{99}

\bibitem[BSM18]{BSM18} B. Blum-Smith and S. Marques (2018), When are permutation invariants Cohen-Macaulay over all fields? {\em Algebra \& Number Theory} 12:8, 1787--1821.

\bibitem[Cor14]{Cor14} Y. de Cornulier, Is every finite group of isometries a subgroup of a finite reflection group? URL (version: 2014-09-01): \url{https://math.stackexchange.com/q/916327}

\bibitem[Duf09]{Duf09} E. Dufresne (2009), Separating invariants and finite reflection groups. {\em Advances in Mathematics} 6:20, 1979--1989.

\bibitem[DEK09]{DEK09} E. Dufresne, J. Elmer, and M. Kohls (2009), The Cohen-Macaulay property of separating invariants of finite groups. {\em Transformation Groups} 14, 771--785.

\bibitem[ES80]{ES80} G. Ellingsrud and T. Sjelbred (1980), Profondeur d'anneaux d'invariants en caract\'{e}ristique $p$. {\em Compositio Mathematica} 41:2, 233--244.

\bibitem[GK03]{GK03} N. Gordeev and G. Kemper (2003), On the branch locus of quotients by finite groups and the depth of the algebra of invariants. {\em Journal of Algebra} 268, 22-38.

\bibitem[Hum90]{Hum90} J. E. Humphreys (1990), {\em Reflection Groups and Coxeter Groups}. Cambridge University Press, Cambridge, United Kingdom.

\bibitem[KW82]{KW82} V. Kac and K. Watanabe (1982), Finite linear groups whose ring of invariants is a complete intersection. {\em Bulletin of the American Mathematical Society} 6: 221--223.

\bibitem[Kem99]{Kem99} G. Kemper (1999), On the Cohen-Macaulay property of modular invariant rings. {\em Journal of Algebra} 215:1, 330--351.

\bibitem[Lan16]{Lan16} C. Lange, Characterization of finite groups generated by reflections and rotations. {\em Journal of Topology} 9:4, 1109--1129.

\bibitem[LM16]{LM16} C. Lange and M. Mikha\^{i}lova (2016), Classification of finite groups generated by reflections and rotations. {\em Transformation Groups} 21:4, 1155--1201.

\bibitem[LT09]{LT09} G. I. Lehrer and D. E. Taylor (2009), {\em Unitary Reflection Groups}. Cambridge University Press, Cambridge, United Kingdom.

\bibitem[LP01]{LP01} M. Lorenz, and J. Pathak (2001), On Cohen-Macaulay Rings of Invariants. {\em Journal of Algebra} 245:1, 247--264.

\bibitem[Mae76]{Mae76} Maerchik, M. A. (unmarried name of Mikha\^{i}lova -- 1976), Finite groups generated by pseudoreflections in four-dimensional Euclidean space. {\em Trudy Kirgiz Gos. Univ. Ser. Mat. Nauk} 11:66--72. (Russian)

\bibitem[MS19]{MS19} I. Martino and R. Singh (2019), Groups generated in low real codimension. {\em Linear Algebra and Its Applications} 570:245--281.

\bibitem[Mik78]{Mik78} Mikha\^{i}lova, M. A. (1978), Finite imprimitive groups generated by pseudoreflections. {\em Studies in geometry and algebra, Kirgiz. Gos. Univ., Frunze} 82--93. (Russian)

\bibitem[Mik82]{Mik82} Mikha\^{i}lova, M. A. (1982) Finite reducible groups generated by pseudoreflections, deposited at VINITI, man. no. 1248-82. (Russian)

\bibitem[Mik85]{Mik85} Mikha\^{i}lova, M. A. (1985), On the quotient space modulo the action of a finite group generated by pseudoreflections. {\em Math. USSR-Izvestiya} 24:1, 99--119.

\bibitem[OS82]{OS82} P. Orlik and L. Solomon (1982), Arrangements defined by unitary reflection groups. {\em Mathematische Annalen} 261:3, 339--357.

\bibitem[OS83]{OS83} P. Orlik and L. Solomon (1983), Coxeter arrangements. In: {\em Singularities. Proc. Symp. Pure Math.} 40 Part 2. American Amtehatmical Society, 269--292.

\bibitem[OT92]{OT92} P. Orlik and H. Terao (1992), {\em Arrangements of Hyperplanes}. Springer-Verlag, Berlin Heidelberg New York.

\bibitem[ST54]{ST54} G. C. Shephard and A. Todd (1954), Finite unitary reflection groups. {\em Canad. J. Math.} 6, 274--304.

\end{thebibliography}
\end{document}